% ----------------------------------------------------------------
% AMS-LaTeX Paper ************************************************
% **** -----------------------------------------------------------
\documentclass{amsart}
\usepackage[numbers]{natbib}
\usepackage{graphicx,amssymb}

% ----------------------------------------------------------------
\vfuzz2pt % Don't report over-full v-boxes if over-edge is small
\hfuzz2pt % Don't report over-full h-boxes if over-edge is small
% THEOREMS -------------------------------------------------------
\newtheorem{thm}{Theorem}[section]

\theoremstyle{definition}
\newtheorem{definition}{Definition}
\theoremstyle{remark}
\newtheorem{remark}{Remark}
\numberwithin{equation}{section}
% MATH -----------------------------------------------------------

% ----------------------------------------------------------------
\begin{document}

\title{More About Solutions Of Quadratic Equations}
\author{{\tiny\bf  The  Accompanying Variables Method}\\Wolf-Dieter Richter}%
\address{Institute of Mathematics, University of Rostock,  Germany.
\newline\hspace*{.1cm}\quad ORCID 0000/0002/3610-7219}%
\email{wolf-dieter.richter@uni-rostock.de}%

%\thanks{}%
\subjclass{12D10, 26C10}%
\keywords{zeros of quadratic equations; vector solutions; circular solutions;
generalized circular solutions; hyperbolic solutions; circular accompanying variables; hyperbolically accompanying variables}%

%\date{}%
%\dedicatory{}%
%\commby{}%
% ----------------------------------------------------------------
\begin{abstract}
For polynomials of degree two which have no zeros, the method of accompanying variables is developed and zeros of associated vector polynomials are determined. Our flexible method uses a wide variety of possible vector-valued vector squaring methods and can be adopted to a wide variety of application situations. Known solutions are made much more precise by replacing the imaginary component and supplemented by introducing a whole class of new vector solutions. Circular, generalized circular and hyperbolic solutions are considered. Anyone who follows the approach of this work and considers equations of third or higher degree will come across further conclusions for the imaginary numbers used there.

\end{abstract}
\maketitle

% ----------------------------------------------------------------
\section{Introduction}
In his dissertation \cite{Gauss1799}, C.F. Gauss proved that "Every
Algebraic Rational Integral Function
In One Variable can be Resolved into
Real Factors of the First or the Second Degree". It is well known when a real polynomial of degree two has no zeros. For such situations, the method of accompanying variables is developed in the present work.

In this approach, two-dimensional polynomials are assigned to a univariate polynomial in an infinite number of possible ways and their vector zeros are described. The choice of accompanying variables can be made depending on an application problem at hand.

The use of so-called imaginary and thus formed complex numbers is avoided. However, there is a close connection between the latter and pairs of accompanying variables which have their values on Euclidean circles, as occurs, for example, with normalized pairs of sine and cosine signals in electrical engineering.

When signals in time  follow generalized or modified sine and cosine curves as in medicine and various areas of technique, the introduction of accompanying variables, which take their values at fixe time on generalized or modified circles, respectively, becomes interesting. We assume that the generalized circles considered enclose discs which are star-shaped sets and are generated by positively homogeneous functionals.

These functionals also serve to define a variety of vector-valued vector products.
All these vector multiplications allow specific definitions of squaring in $R^2$. In this way, a large class of vector valued quadratic polynomials will be introduced whose zeros are subsequently studied.

 For a long time already, one might have the impression that the theory of the fundamental theorem of algebra can be considered to be largely rounded off. While the present work approaches the sub-topic of quadratic equations from a new perspective, the search for effective proofs of the theorem itself has also not yet come to an end, as one can find in \cite{NoMa}. A link to the English translation of \cite{Gauss1799} can also be found there.

\section{Ordinary observations}
We consider the real-valued function with real coefficients
\[f(x)=x^2+px+q,\,x\in R\]
which is strictly positive if
\begin{equation}\label{pos1}
q-\frac{p^2}{4}>0.
\end{equation}
In this case, the equation
\begin{equation}\label{01}
f(x)=0
\end{equation}
has no solution. In case
\begin{equation}\label{nonneg}
\frac{p^2}{4}-q\geq 0
\end{equation}
equation (\ref{01}) has the well-known solutions
\begin{equation}\label{Lsg1}
x_{1/2}=-\frac{p}{2} \overset{+}{-}\sqrt{\frac{p^2}{4}-q}
\end{equation}
and for $x\in (-\infty,x_2)\cup(x_1,\infty)$ holds $f(x)>0$ and for $x\in(x_1,x_2)$ inequality $f(x)<0$ holds true.

The following section is mainly devoted to the case where condition (\ref{pos1}) is fulfilled.

\section{Circular accompanying variables}
We now assume that there is a real variable $y$ accompanying the real variable $x$, possibly hidden, and consider the vector-valued function with real coefficients and the vector $1\hspace{-.13cm}{1}=\left(
                                                       \begin{array}{c}
                                                         1 \\
                                                         0 \\
                                                       \end{array}
                                                     \right)\in R^2,
$
\[f_{\circledast}(z)=z^{\circledast 2}\oplus p\cdot z\oplus q\cdot 1\hspace{-.13cm}{1},\;z=\left(
                                                   \begin{array}{c}
                                                     x \\
                                                     y \\
                                                   \end{array}
                                                 \right)\in R^2\]
where $p\cdot z=\left(
                  \begin{array}{c}
                    px \\
                    py \\
                  \end{array}
                \right)
$ means multiplication of vector $z$ by scalar $p\in R,$ $\left(
                                                            \begin{array}{c}
                                                              x_1 \\
                                                              y_1 \\
                                                            \end{array}
                                                          \right)
\oplus\left(
   \begin{array}{c}
     x_2 \\
     y_2 \\
   \end{array}
 \right)=\left(
           \begin{array}{c}
             x_1+x_2 \\
             y_1+y_2 \\
           \end{array}
         \right)
$ means common vector addition and $z^{\circledast  2}=z \circledast z$ denotes circular squaring of $z\in R^2$. Here, the circular vector-valued vector multiplication is defined according to \cite{Richter2020} by
\begin{equation}\label{Mult1}\left(
    \begin{array}{c}
      x_1 \\
      y_1 \\
    \end{array}
  \right)\circledast \left(
           \begin{array}{c}
             x_2 \\
             y_2 \\
           \end{array}
         \right)=\left(
                  \begin{array}{c}
                    x_1x_2-y_1y_2 \\
                    x_1y_2+x_2y_1 \\
                  \end{array}
                \right).
\end{equation}
Obviously,
\[1\hspace{-.13cm}{1}\circledast z=z, z\in R^2.\]
Let $C(r)=\{\left(
              \begin{array}{c}
                x \\
                y \\
              \end{array}
            \right)\in R^2:x^2+y^2=r^2\}, r>0$
            denote the Euclidean circle of radius $r$ and let $z_2$ be an arbitrary element of $C(1)$. For $z_1\in C(r_1)$ then holds $z_1\circledast z_2\in C(r_1)$, that is $z_2$ moves $z_1$ on the Euclidean circle $C(r_1)$. Such a movement is  complemented by a change from the circle $C(r_1)$ to the circle $C(r_1r_2)$ if the element from $C(1)$ is additionally multiplied by $r_2>0$ so that  $z_2\in C(r_2), r_2>0.$ In particular, with $z=\left(
                                  \begin{array}{c}
                                    x \\
                                    y \\
                                  \end{array}
                                \right),
            $
\begin{equation}\label{Quad1}
z^{\circledast  2}=z \circledast z=\left(
              \begin{array}{c}
                x^2-y^2 \\
                2xy \\
              \end{array}
            \right).
\end{equation}
The operations $\oplus$ and $\circledast$ are distributive.
We put $\lambda\cdot 1\hspace{-.13cm}{1}=
\lambda 1\hspace{-.13cm}{1} \text{ and }
(-1)\cdot 1\hspace{-.13cm}{1}=-1\hspace{-.13cm}{1},$
for simplicity. Obviously,
\begin{equation}\label{I1}
I\hspace{-.12cm}{I}^{\circledast  2}=-1\hspace{-.13cm}{1}
\end{equation}
where
\begin{equation}\label{I}
I\hspace{-.12cm}{I}
=\left(
    \begin{array}{c}
      0 \\
      1 \\
    \end{array}
  \right)\in R^2
\end{equation} also has the property of causing an orthogonal rotation,
\[\left(
    \begin{array}{c}
      x \\
      y \\
    \end{array}
  \right)\circledast I\hspace{-.12cm}{I}=\left(
    \begin{array}{c}
      -y \\
      x \\
    \end{array}
  \right).
\]
We now consider with $\mathfrak o=\left(
                                    \begin{array}{c}
                                      0 \\
                                      0 \\
                                    \end{array}
                                  \right)
$ the vector equation
\begin{equation}\label{02}
f_{\circledast}(z)=\mathfrak o.
\end{equation}
\begin{thm}
\label{Th1}
\begin{enumerate}
\item[(a)] If condition (\ref{pos1}) is fulfilled then the solutions of equation (\ref{02}) are
\begin{equation}\label{Lsg2}
z_{1/2}=-\frac{p}{2}1\hspace{-.13cm}{1}\overset{+}-\sqrt{q-\frac{p^2}{4}}I\hspace{-.12cm}{I}.
\end{equation}
\item[(b)] If condition (\ref{nonneg}) is fulfilled then with $x_{1/2}$ from (\ref{Lsg1}) the solutions of (\ref{02}) are
\begin{equation}\label{Lsg3}
z_{1/2}=\left(
          \begin{array}{c}
            x_{1/2} \\
            0 \\
          \end{array}
        \right).
\end{equation}
\end{enumerate}

\end{thm}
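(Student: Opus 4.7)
The plan is to reduce the vector equation (\ref{02}) to two real scalar equations and then split into cases according to which factor vanishes in the second.

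First, using (\ref{Quad1}) together with the definitions of scalar multiplication and $\oplus$, I would write $f_{\circledast}(z)=\mathfrak o$ coordinatewise as
\[x^2-y^2+px+q=0,\qquad 2xy+py=0.\]
The second equation factors as $y(2x+p)=0$, which naturally splits the analysis into the two sub-cases $y=0$ and $x=-p/2$.

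In the sub-case $y=0$, the first equation reduces to the scalar equation $f(x)=0$ from Section 2 and hence admits real solutions precisely when (\ref{nonneg}) holds; these are the $x_{1/2}$ of (\ref{Lsg1}), which furnish the vector solutions (\ref{Lsg3}) and thus prove part (b). In the sub-case $x=-p/2$, the first equation becomes $y^2=q-p^2/4$, whose real solutions $y=\pm\sqrt{q-p^2/4}$ exist iff $q-p^2/4\geq 0$. Under the strict inequality (\ref{pos1}) this yields the pair (\ref{Lsg2}), conveniently expressed in the unit vectors $1\hspace{-.13cm}{1}$ and $I\hspace{-.12cm}{I}$ from (\ref{I}).

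A conceptually cleaner alternative, which I would present as a double-check, is to complete the square at the vector level: using the stated distributivity of $\oplus$ and $\circledast$ together with $1\hspace{-.13cm}{1}\circledast z=z$, one verifies the identity
\[f_{\circledast}(z)=\bigl(z\oplus\tfrac{p}{2}\,1\hspace{-.13cm}{1}\bigr)^{\circledast 2}\oplus\bigl(q-\tfrac{p^2}{4}\bigr)\,1\hspace{-.13cm}{1},\]
after which the key relation (\ref{I1}), namely $I\hspace{-.12cm}{I}^{\circledast 2}=-1\hspace{-.13cm}{1}$, immediately exhibits (\ref{Lsg2}) as the pair of roots under hypothesis (\ref{pos1}), while for (\ref{nonneg}) the right-hand constant has the correct sign to be a square in the $1\hspace{-.13cm}{1}$-direction, reproducing (\ref{Lsg3}).

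Finally I would verify exhaustiveness. Under (\ref{pos1}), the $y=0$ sub-case reduces to a scalar quadratic with no real root, so (\ref{Lsg2}) already captures every solution. Under (\ref{nonneg}), the $x=-p/2$ sub-case requires $q\geq p^2/4$, which combined with the standing hypothesis forces $q=p^2/4$; in that boundary situation the two sub-cases collapse to the common root $-\frac{p}{2}\,1\hspace{-.13cm}{1}$, which is consistent with (\ref{Lsg3}). I do not anticipate a genuine obstacle here; the only point requiring care is to treat the dichotomy $y(2x+p)=0$ exhaustively so that no vector zero is overlooked and no spurious one is introduced.
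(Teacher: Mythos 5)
Your proposal is correct and follows essentially the same route as the paper: reduce (\ref{02}) to the coordinate system $x^2-y^2+px+q=0$, $2xy+py=0$, and split on the factorization $y(2x+p)=0$ (the paper phrases the dichotomy as $y=0$ versus $y\neq 0$, which amounts to the same case analysis). Your additional vector-level completion of the square and the explicit check of the overlap case $q=p^2/4$ are sound extras not present in the paper, but the core argument is identical.
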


\begin{proof}We first write the vector equation (\ref{02}) as the system of univariate equations
\begin{equation}\label{A}
x^2-y^2+px+q=0
\end{equation}
and
\begin{equation}\label{B}
2xy+py=0.
\end{equation}
We now study the case $y=0$: Equation (\ref{B}) is trivially satisfied and equation (\ref{A}) agrees with equation (\ref{01}) which under assumption (\ref{nonneg}) has the solutions $x_{1/2}$ out of (\ref{Lsg1}), but under assumption (\ref{pos1}) has none. This proves (b).

We now move on to the case $y\neq 0$: Equation (\ref{B}) now means $x=-\frac{p}{2}$ and equation (\ref{A}) reduces to $y^2=q-\frac{p^2}{4}$. Under assumption (\ref{nonneg}), equation (\ref{A}) has no solution, but under (\ref{pos1}) holds $y=\overset{+}-\sqrt{q-\frac{p^2}{4}}$, so equation (\ref{02}) has the solutions out of (\ref{Lsg2}).

In summary, equation (\ref{02}) under assumption (\ref{nonneg}) has the solutions (\ref{Lsg3}) and under assumption (\ref{pos1}) the solutions (\ref{Lsg2})
\end{proof}
\begin{remark}
 It is essential for the understanding of this work that one must not "identify" the vector in (\ref{Lsg3}) with the scalar in (\ref{Lsg1}). Also, the product $\circledast$, borrowed from the current theory of complex numbers, is consistently used here only for vectors from $R^2$.
\end{remark}
In the following theorem we give a geometric description of the solution of equation (\ref{02}) that is equivalent to the anlytical description in Theorem \ref{Th1}.
\begin{thm}\label{Th2}
If condition (\ref{pos1}) is met, the solutions of the vector equation (\ref{02}) can be represented geometrically after the polar coordinates transformation $x=r  \cos\varphi, y=r  \sin\varphi$ as follows:
\begin{equation}\label{Los}
r=\sqrt{q} \quad\text{ and }\quad \cos\varphi=-\frac{p}{2\sqrt{q}},\; \sin\varphi=\overset{+}-\sqrt{1-\frac{p^2}{4{q}}}.
\end{equation}
\end{thm}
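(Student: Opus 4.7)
The plan is to derive the polar representation (\ref{Los}) directly from the analytic solutions of Theorem \ref{Th1}(a) by substituting them into the coordinate change $x=r\cos\varphi$, $y=r\sin\varphi$. Since condition (\ref{pos1}) is assumed, formula (\ref{Lsg2}) already supplies the Cartesian components $x=-p/2$ and $y=\pm\sqrt{q-p^2/4}$, so nothing new about existence needs to be proved; the entire task is a change of variables.

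First I would compute the radius from the Pythagorean relation $r^2=x^2+y^2$. Inserting the two components gives $r^2=p^2/4+(q-p^2/4)=q$, and taking the positive root (legitimate because $q>p^2/4\geq 0$ by (\ref{pos1})) yields $r=\sqrt{q}$, which is the first assertion in (\ref{Los}).

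Next I would read off the trigonometric entries by dividing: $\cos\varphi=x/r=-p/(2\sqrt{q})$ and $\sin\varphi=y/r=\pm\sqrt{q-p^2/4}/\sqrt{q}=\pm\sqrt{1-p^2/(4q)}$, reproducing the remaining equalities in (\ref{Los}). A one-line consistency verification, namely $\cos^2\varphi+\sin^2\varphi=p^2/(4q)+(1-p^2/(4q))=1$, confirms that $\varphi$ is a bona fide angle and that the sign choices in $\sin\varphi$ correspond exactly to the $\pm$ in (\ref{Lsg2}).

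There is no substantive obstacle here; the statement is a geometric restatement of Theorem \ref{Th1}(a). The only thing worth flagging is that the parameter $r$ must be recovered as the \emph{positive} square root of $q$, which requires exactly the strict inequality in (\ref{pos1}) rather than the weaker (\ref{nonneg}). One could optionally close with the remark that the two solutions lie symmetrically on the Euclidean circle $C(\sqrt{q})$ introduced earlier, tying the computation back to the circular-motion interpretation of $\circledast$.
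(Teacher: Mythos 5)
Your proof is correct, but it takes a different route from the paper's. You derive (\ref{Los}) by taking the Cartesian solutions $x=-p/2$, $y=\pm\sqrt{q-p^2/4}$ already established in Theorem \ref{Th1}(a) and simply converting them to polar form via $r^2=x^2+y^2=q$ and $\cos\varphi=x/r$, $\sin\varphi=y/r$; this is legitimate because the proof of Theorem \ref{Th1} characterizes \emph{all} solutions, and your sign and positivity checks (in particular that $r=\sqrt{q}>0$ needs the strict inequality (\ref{pos1})) are in order. The paper instead works directly from the equation: it substitutes $x=r\cos\varphi$, $y=r\sin\varphi$ into (\ref{02}), obtains the system $r^2(2\cos^2\varphi-1)+pr\cos\varphi+q=0$ and $r\sin\varphi(2r\cos\varphi+p)=0$, factors the second equation to get $r\cos\varphi=-p/2$ (excluding $r=0$ and $\varphi=k\pi$), and substitutes back to find $r^2=q$. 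Your argument is shorter and makes the advertised equivalence with Theorem \ref{Th1}(a) explicit as a pure change of coordinates; the paper's derivation is self-contained and, more importantly, serves as the template that is generalized verbatim in Section 4 (Theorem 4.2), where the analogue of Theorem \ref{Th1}(a) is not available in closed form and one \emph{must} work from the polar system with $Q^{\circledast}(\varphi)$. So both proofs are valid, but only the paper's method extends to the generalized circular setting.
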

\begin{proof}The  equation (\ref{02}) can be written in the form of the following two equations:
\[
r^2(  \cos^2\varphi-\sin^2\varphi)+pr\cos\varphi+q=0
\]
and
\[
2r^2  \cos\varphi\sin\varphi+pr\sin\varphi=0
\]
or
\[
r^2 (2 \cos^2\varphi-1)+pr\cos\varphi+q=0
\]
and
\[
r\sin\varphi(2r\cos\varphi+p)=0.
\]
Except for $r=0$ or $\varphi=k\pi, k\in\{0,\overset{+}-1,\overset{+}-2,...\},$ the last equation is satisfied for $r\cos\varphi=-\frac{p}{2}.$
The previous equation is then
\[2(\frac{p}{2})^2-r^2+p(-\frac{p}{2})+q=0\]
or $r^2=q. $
\end{proof}

\begin{remark}The solution of equation (\ref{02}), that is the variable of interest $x$ and the  variable $y$, which is assumed to exist,
 jointly belong to the circle $C(\sqrt{q}).$ In other words,  the vector $(x,y)^T$ of accompanying variables takes values in                    $C(\sqrt{q})$.
\end{remark}
\section{Accompanying variables in generalized circles with respect to phs-functionals}
Let $(R^2,\oplus,\cdot)$ be the two-dimensional vector space of columns $(x,y)^T$ of real numbers where $\oplus:R^2\times R^2\rightarrow R^2$ and $\cdot:R\times R^2\rightarrow R^2$ still denote the usual componentwise vector addition and scalar multiplication, respectively. The vector $\mathfrak o=(0,0)^T$ is the additive neutral element of this space. Suppose that $||.||: R^2\rightarrow [0,\infty)$ is a \textbf{p}ositively \textbf{h}omogeneous  functional such that the set $B=\{z\in R^2:||z||\leq 1\}$ is \textbf{s}tar-shaped with respect to the inner point $\mathfrak o$. We call such functional a phs-functional.
A particular element of this class of functionals can be
 a norm or an antinorm as defined in  \cite{MoRi}. Furthermore, we call $B$  the unit disc with respect to the functional $||.||$ and its boundary
 \[S=\{z\in R^2:||z||=1\}\] the corresponding  generalized unit circle.

 The variety of accompanying variables made possible in the present work is essentially based on the introduction of numerous vector-valued vector products such as in \cite{Richter2020,Richter2022b}. The following general definition is taken from the literature.
 \begin{definition} The vector-valued  product of the  vectors $z_l=(x_l,y_l)^T, l=1,2$ from $R^2$ with respect to the phs-functional $||.||$ is defined byp
\begin{equation}\label{Mult2} z_1 \circ_{||.||} z_2=\frac{||z_1||\cdot||z_2||}{||z_1 \circledast z_2||}\cdot z_1 \circledast z_2
\end{equation} and abbreviated as $z_1\circ_{||.||} z_2=z_1\circ z_2$.
\end{definition}
Note that if the functional $||.||$ denotes the Euclidean norm $|.|_2$, then the multiplication operation $\circ$ agrees with the operation $\circledast$ and is called the circular vector multiplication or the vector-valued  vector multiplication with respect to the functional $|.|_2, \text{ thus } \circ_{|.|_2}=\circledast$.
 In addition, the   product with respect to the arbitrary  phs-functional $||.||$, $\circ$, is commutative and associative.  Further, vector addition $\oplus$ and vector multiplication $\circ$  are not distributive, in general, but a weak distributivity property in the sense of Hankel\cite{Hankel} is valid. Moreover,
 \[z_1\circ z_2=\mathfrak o
  \text{ if and only if } z_1=\mathfrak o \text{ or } z_2=\mathfrak o,
  \] \[\frac{z_1}{||z_1||}\circ\frac{z_2}{||z_2||}\in S \text{ if } z_1\neq \mathfrak o \text{ and } z_2\neq\mathfrak o
   \]
   as well as
   \[(\lambda z_1)\circ(\mu z_2)=(\lambda\mu)z_1\circ z_2 \text{ for all } z_1 \text{ and } z_2 \text{ from }R^2
   \text{ and  real numbers }\lambda \text{ and }\nu
   .\]Using the notations
   \[
z^{\circ 2}   =z\circ z
   \]for general vector-valued squaring of vector $z$, and \[Q(z)= \frac{||z||^2}{||z^{\circledast 2}||},\]
we now consider the vector-valued vector polynomial function
\[
f(z)=z^{\circ 2}+p\cdot z +q\cdot 1\hspace{-.14cm}1\]
\[
\qquad\qquad=Q(z)z^{\circledast 2}+p\cdot z +q\cdot 1\hspace{-.14cm}1
\]
and look for its vector-valued zeros that is for all $z=(x,y)^T\in R^2$ satisfying equation
\begin{equation} \label{Null3}
f(z)=\mathfrak o.
\end{equation}
If we would  introduce the notation $f=f_{\circ}$ and if $\circ$ would denote the product with respect to the Euclidean norm $|.|_2$
then we would have $f=f_\circledast$.

The following theorem follows on from the statement in Theorem \ref{Th2} and contains a first statement about where solutions of (\ref{Null3})
are located.
We assume that for the rest of this section holds
\[Q(z)\geq\frac{p^2}{4q}.\]
\begin{thm}[a]
Every solution $z=(x,y)^T$ of equation (\ref{Null3}) satisfying $y\neq 0$ belongs to the circle
\[\{(x,y)^T\in R^2:((x+\frac{q}{p})^2+y^2=(\frac{q}{p})^2\}\]
and satisfies the equations
\[x=-\frac{p}{2Q(z)}\text{\; and \;} y^2=\frac{q}{Q(z)}-(\frac{p}{2Q(z)})^2.\]
(b) If condition (\ref{nonneg}) is satisfied then every solution of (\ref{Null3}) with $y=0$ can be represented as $z=\left(
        \begin{array}{c}
          x \\
          0 \\
        \end{array}
      \right)
$ where $x$ solves (\ref{01}) according to (\ref{Lsg1}).
\end{thm}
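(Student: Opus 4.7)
The plan is to reduce the vector equation (\ref{Null3}) to a pair of scalar equations in $x$ and $y$ and case-split on whether $y$ vanishes, following the strategy used in the proof of Theorem \ref{Th1} but now with the additional scalar factor $Q(z)$ sitting in front of $z^{\circledast 2}$.

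First I would expand $z^{\circledast 2}=(x^2-y^2,\,2xy)^T$ using (\ref{Quad1}) and write (\ref{Null3}) component-wise as
\[
Q(z)(x^2-y^2)+px+q=0 \qquad \text{and} \qquad 2Q(z)xy+py=0,
\]
the direct analogues of (\ref{A}) and (\ref{B}).

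For part (a), the second equation factors as $y\bigl(2Q(z)x+p\bigr)=0$, so the hypothesis $y\neq 0$ forces $x=-p/(2Q(z))$. Substituting this back into the first equation and solving for $y^2$ gives $y^2=q/Q(z)-p^2/(4Q(z)^2)$; the standing assumption $Q(z)\geq p^2/(4q)$ is exactly what makes this right-hand side non-negative, so the two claimed formulas follow at once. To verify that $(x,y)^T$ lies on the Euclidean circle centered at $(-q/p,0)$ of radius $|q/p|$, I would add the two component formulas to obtain $x^2+y^2=q/Q(z)$, then note that inserting $x=-p/(2Q(z))$ also yields $-2qx/p=q/Q(z)$, so that $(x+q/p)^2+y^2=(q/p)^2$ follows by elementary algebra.

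For part (b), $y=0$ satisfies the second equation automatically, and since then $z=x\cdot 1\hspace{-.13cm}{1}$ is a scalar multiple of $1\hspace{-.13cm}{1}$, the evaluation of $Q(z)$ on the $x$-axis reduces the first equation to the scalar quadratic $x^2+px+q=0$, which is (\ref{01}), with solutions (\ref{Lsg1}) whenever (\ref{nonneg}) holds. The main conceptual subtlety of the whole argument, and what distinguishes it from the proof of Theorem \ref{Th1}, is that the scalar $Q(z)$ in the relation $x=-p/(2Q(z))$ is itself a function of the unknown vector $z$ through the phs-functional $\|\cdot\|$, so this is only an implicit constraint; the striking payoff of part (a) is nevertheless that, despite $Q(z)$ depending on the chosen $\|\cdot\|$, the locus of vector solutions always lies on one and the same Euclidean circle, determined solely by the coefficients $p$ and $q$.
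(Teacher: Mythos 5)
Your proposal takes essentially the same route as the paper: it writes (\ref{Null3}) componentwise as $Q(z)(x^2-y^2)+px+q=0$ and $2Q(z)xy+py=0$, factors the second equation to get $x=-p/(2Q(z))$ when $y\neq 0$, substitutes to obtain $y^2=q/Q(z)-p^2/(4Q(z)^2)$, and for $y=0$ uses the fact that $Q$ equals $1$ on the first axis to reduce to the scalar quadratic and invoke Theorem \ref{Th1}(b). The only difference is that you spell out the algebra behind the circle claim (namely $x^2+y^2=q/Q(z)=-2qx/p$, hence $(x+\frac{q}{p})^2+y^2=(\frac{q}{p})^2$), a step the paper leaves implicit, and you correctly note where the standing assumption $Q(z)\geq p^2/(4q)$ is used.
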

\begin{proof}We start from the equation system
\begin{equation}
Q(z)(x^2-y^2)+px+q=0,
\end{equation}
\begin{equation}
2Q(z)xy+py=0.
\end{equation}
Let $y\neq 0$. By the second equation,
\[x=-\frac{p}{2Q(z)}.\]
With the first equation this leads to
\[y^2=\frac{q}{Q(z)}-(\frac{p}{2Q(z)})^2\]
from which finally follows the first statement. The validity of the second one can be seen as follows. Every phs-functional $||.||$ satisfies
\[Q(\left(
      \begin{array}{c}
        x \\
        0 \\
      \end{array}
    \right))=\frac{||\left(
                      \begin{array}{c}
                        x \\
                        0 \\
                      \end{array}
                    \right)
    ||^2}{||\left(
                      \begin{array}{c}
                        x^2 \\
                        0 \\
                      \end{array}
                    \right)
    ||}=1,
\] thus
\[
\left(
  \begin{array}{c}
    x \\
    0 \\
  \end{array}
\right)^{\circ 2}+p\left(
  \begin{array}{c}
    x \\
    0 \\
  \end{array}
\right)+q1\hspace{-.15cm}1=\left(
  \begin{array}{c}
    x^2+px+q \\
    0 \\
  \end{array}
\right)=\left(
          \begin{array}{c}
            0 \\
            0 \\
          \end{array}
        \right).
\]An application of Theorem 3.1(b) completes the proof
\end{proof}
We now turn over to a geometric representation of the solution of equation (\ref{Null3}). To this end, put \[N(\varphi)=||\left(
                    \begin{array}{c}
                      \cos\varphi \\
                      \sin\varphi \\
                    \end{array}
                  \right)
   || \qquad\text{ and }\qquad Q^{\circledast}(\varphi)=\frac{N^2(\varphi)}{N(2\varphi)}.\]
\begin{thm}
In terms of usual polar coordinates $r$ and $\varphi$, $\varphi \notin \{0,\pi\}$, every  solution $z=(x,y)^T$ of the vector equation (\ref{Null3}) satisfies the following two equations:
\begin{equation}\label{4.5}
\frac{N^2(\varphi)\cos^2\varphi}{N(2\varphi)}=\frac{p^2}{4q}
\end{equation}
and
\begin{equation}\label{4.6}r=\frac{\sqrt{q N(2\varphi)}}{N(\varphi)}.
\end{equation}
\end{thm}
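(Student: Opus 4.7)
The plan is to translate equation (\ref{Null3}) into polar coordinates using the same splitting into two scalar equations as in the previous theorem, then exploit the positive homogeneity of the phs-functional to reduce $Q(z)$ to a purely angular quantity $Q^{\circledast}(\varphi)$, and finally solve the resulting pair of equations in the unknowns $r$ and $\varphi$.

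First I would substitute $x=r\cos\varphi$ and $y=r\sin\varphi$ into the two scalar equations
\[Q(z)(x^2-y^2)+px+q=0,\qquad 2Q(z)xy+py=0\]
already derived in the preceding theorem. Using $\cos^2\varphi-\sin^2\varphi=\cos 2\varphi$ and $2\cos\varphi\sin\varphi=\sin 2\varphi$ together with positive homogeneity $\|r u\|=r\|u\|$ for $r>0$, both $\|z\|^2=r^2 N^2(\varphi)$ and $\|z^{\circledast 2}\|=r^2 N(2\varphi)$ follow at once, so $Q(z)=Q^{\circledast}(\varphi)$. The two equations then become
\[Q^{\circledast}(\varphi)\, r^2\cos 2\varphi + pr\cos\varphi + q=0,\]
\[r\sin\varphi\bigl(2Q^{\circledast}(\varphi)\, r\cos\varphi + p\bigr)=0.\]

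Since $\varphi\notin\{0,\pi\}$ and $r\neq 0$ (otherwise the equation reduces to $q\cdot 1\!\!1=\mathfrak o$, contradicting the standing assumption that $q>p^2/4\ge 0$), the second equation forces
\[r\cos\varphi=-\frac{p}{2Q^{\circledast}(\varphi)}.\]
Multiplying this relation by $2Q^{\circledast}(\varphi)\,r\cos\varphi$ and writing $\cos 2\varphi=2\cos^2\varphi-1$ in the first equation, the cross term $2Q^{\circledast}(\varphi)r^2\cos^2\varphi$ cancels against $-pr\cos\varphi$, leaving
\[-Q^{\circledast}(\varphi)\,r^2+q=0,\]
which gives (\ref{4.6}) directly after taking the positive root. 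Substituting this value of $r^2$ back into the squared form of $r\cos\varphi=-p/(2Q^{\circledast}(\varphi))$ and using $Q^{\circledast}(\varphi)=N^2(\varphi)/N(2\varphi)$ yields (\ref{4.5}) after a short rearrangement.

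The calculations themselves are routine; the only subtle point is the passage $Q(z)=Q^{\circledast}(\varphi)$, where one must invoke the positive homogeneity of the phs-functional separately at both scales $r$ and $r^2$ and observe that the circular square of $r(\cos\varphi,\sin\varphi)^T$ equals $r^2(\cos 2\varphi,\sin 2\varphi)^T$ in view of (\ref{Quad1}). Once this geometric identity is fixed, the rest is a mirror image of the proof of Theorem \ref{Th2}, with $Q^{\circledast}(\varphi)$ replacing the unit factor encountered in the Euclidean case.
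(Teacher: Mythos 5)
Your proposal is correct and follows essentially the same route as the paper's own proof: passing to polar coordinates, using positive homogeneity to replace $Q(z)$ by $Q^{\circledast}(\varphi)$, splitting into the same two scalar equations, and eliminating $r\cos\varphi=-p/(2Q^{\circledast}(\varphi))$ to obtain (\ref{4.6}) and then (\ref{4.5}). Your explicit remarks on $r\neq 0$ and on verifying $Q(z)=Q^{\circledast}(\varphi)$ at both scales are small welcome additions, not deviations.
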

\begin{proof}
We first remark that under polar coordinate transformation $Q(z)$ becomes
\[  \frac{||\left(
                  \begin{array}{c}
                    \cos\varphi \\
                    \sin\varphi \\
                  \end{array}
                \right)
                ||^2}{||\left(
                  \begin{array}{c}
                    \cos\varphi \\
                    \sin\varphi \\
                  \end{array}
                \right)\circledast \left(
                  \begin{array}{c}
                    \cos\varphi \\
                    \sin\varphi \\
                  \end{array}
                \right)
||}=Q^{\circledast
}(\varphi).\]
We now
write the vector equation (\ref{Null3}) as
\[Q^{\circledast}(\varphi)r^2\left(
                      \begin{array}{c}
                        \cos2\varphi \\
                        \sin2\varphi \\
                      \end{array}
                    \right)
+pr\left(
                                  \begin{array}{c}
                                    \cos\varphi \\
                        \sin\varphi \\
                                  \end{array}
                                \right)+q\left(
                                          \begin{array}{c}
                                            1 \\
                                            0 \\
                                          \end{array}
                                        \right)=\left(
                                                 \begin{array}{c}
                                                   0 \\
                                                   0 \\
                                                 \end{array}
                                               \right)
\]or as the two individual equations
\begin{equation}\label{21}
Q^{\circledast}(\varphi){r^2}(2\cos^2\varphi-1)+pr\cos\varphi+q=0
\end{equation}
and
\begin{equation}\label{22}
(2Q^{\circledast}(\varphi){r^2}\cos\varphi+pr)\sin\varphi=0.
\end{equation}
The first factor in (\ref{22}) equals zero if
\[{rQ^{\circledast}(\varphi)\cos\varphi}=-\frac{p}{2}\]
or
\begin{equation}r^2=\frac{p^2}{4\cos^2\varphi\, Q^{\circledast 2}(\varphi)}.
\end{equation}This inserted into (\ref{21}) gives
\[q={r^2}Q^{\circledast}(\varphi)=\frac{p^2}{4\cos^2\varphi\, Q^{\circledast }(\varphi)}\]or
\begin{equation}
\cos^2\varphi Q^{\circledast}(\varphi)=\frac{p^2}{4q}
\end{equation}
from where the result follows
\end{proof}
\begin{remark}If one considers the particular equation (\ref{02}) then the two conditions (\ref{4.5}) and (\ref{4.6})
reduce to (\ref{Los}).\end{remark}
\section{Hyperbolically accompanying variable}
We now assume that the real variables $x$ and $y$ occur in the range
\begin{equation}\label{HypSpace}
x\in R \text{  and  }  |y|< x
\end{equation}which can easily be restricted to a space-time model by putting $x>0$.
A hyperbolic vector-valued vector product is now defined according to \cite{Richter2022b} as
\begin{equation}\label{HypProd}
\left(
                            \begin{array}{c}
                               x_1 \\
                               y_1 \\
                             \end{array}
                           \right)
\textcircled{h}\left(
  \begin{array}{c}
    x_2 \\
    y_2 \\
  \end{array}
\right)
\left(
  \begin{array}{c}
    x_1x_2+y_1y_2 \\
    x_1y_2+x_2y_1 \\
  \end{array}
\right)
.
\end{equation}In general, such a multiplication describes a movement on a hyperbola and additionally the change between two hyperbolas. Let $C_h^+(r)=\{\left(
                              \begin{array}{c}
                                x \\
                                y \\
                              \end{array}
                            \right)\in R^2: x>0, x^2-y^2=r^2
\}$ denote a "hyperbolic half circle of radius r>0" and let $z_2$ be an arbitrary element of $C_h^+(1)$. For $z_1\in C_h^+(r_1) $ then holds $z_1 \textcircled{h} z_2\in C_h^+(r_1) $, that is $z_2$ moves $z_1$ along the "half circle"  $C_h^+(r_1)$. Such a movement is complemented by a change from the "half circle" $C_h^+(r_1)$  to the "half circle " $C_h^+(r_1r_2)$ if the element from $C_h^+(1)$ is additionally multiplied by $r_2>0$ and so that $z_2\in C_h^+(r_2), r_2>0$.
In particular, for $z=\left(
                        \begin{array}{c}
                          x \\
                          y \\
                        \end{array}
                      \right)
$, we have the equations
\begin{equation}\label{HypQuad}
z^{\textcircled{h}2}=\left(
                       \begin{array}{c}
                         x^2+y^2 \\
                         2xy \\
                       \end{array}
                     \right)
\end{equation}
and, in contrast to (\ref{I1}),
\begin{equation}\label{HypImag}
I\hspace{-.14cm}{I}^{\textcircled{h}2}=1\hspace{-.15cm}{1}.
\end{equation}
We now consider the function
\[f_{\textcircled{h}}(z)=z^{\textcircled{h}2}\oplus p\cdot z\oplus q\cdot 1\hspace{-.15cm}{1}
\]
and the equation
\begin{equation}\label{03}
f_{\textcircled{h}}(z)=\mathfrak o.
\end{equation}

\begin{thm}
If condition (\ref{pos1}) is fulfilled then the equation (\ref{03}) has no solution. If assumption (\ref{nonneg})  is satisfied then equation (\ref{03}) with $x_{1/2}$ out of (\ref{Lsg1}) has the solutions
    \[z_{1/2}=\left(
                \begin{array}{c}
                  x_{1/2} \\
                  0 \\
                \end{array}
              \right)=x_{1/2}\cdot 1\hspace{-.15cm}{1}
       \text{ and }\quad z_{3/4}=-\frac{p}{2}\cdot 1\hspace{-.15cm}{1}\overset{+}-\sqrt{\frac{p^2}{4}-q}\cdot I\hspace{-.14cm}{I}.
    \]
%\end{enumerate}
\end{thm}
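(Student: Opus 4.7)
The plan is to mirror the proof of Theorem \ref{Th1}, exploiting the fact that the only change from circular to hyperbolic squaring is a sign in the first coordinate of $z^{\circledast 2}$ versus $z^{\textcircled{h}2}$. Using (\ref{HypQuad}), I would rewrite the vector equation (\ref{03}) componentwise as the scalar system
\begin{equation*}
x^2+y^2+px+q=0,\qquad 2xy+py=0,
\end{equation*}
and then factor the second equation as $y(2x+p)=0$, splitting the analysis into the two cases $y=0$ and $y\neq 0$.

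In the case $y=0$ the first equation reduces to $x^2+px+q=0$, which is exactly (\ref{01}). By the discussion preceding Theorem \ref{Th1}, this equation has the real solutions $x_{1/2}$ from (\ref{Lsg1}) precisely when (\ref{nonneg}) holds, and no real solution when (\ref{pos1}) holds; this yields the pair $z_{1/2}=x_{1/2}\cdot 1\hspace{-.13cm}{1}$ in the assertion.

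In the case $y\neq 0$, dividing the second equation by $y$ gives $x=-p/2$. Substituting this into the first equation and simplifying leaves
\begin{equation*}
y^2=\frac{p^2}{4}-q,
\end{equation*}
which has real solutions $y=\overset{+}{-}\sqrt{p^2/4-q}$ exactly when (\ref{nonneg}) holds, and none under (\ref{pos1}). This produces $z_{3/4}$ as stated, upon writing the resulting vector as $-\tfrac{p}{2}\cdot 1\hspace{-.13cm}{1}\overset{+}{-}\sqrt{p^2/4-q}\cdot I\hspace{-.12cm}{I}$ with $I\hspace{-.12cm}{I}$ from (\ref{I}). Combining the two cases and observing that under (\ref{pos1}) \emph{neither} branch produces a real solution completes the argument.

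There is no genuine obstacle; the only subtlety worth flagging is that the sign flip (\ref{HypQuad}) vs.\ (\ref{Quad1}) reverses the role of the two assumptions: in the circular setting it was (\ref{pos1}) that furnished the genuinely two-dimensional solutions via $I\hspace{-.12cm}{I}$, whereas here (\ref{HypImag}) makes $I\hspace{-.12cm}{I}$ behave like a ``real'' root, and the extra pair $z_{3/4}$ appears alongside $z_{1/2}$ under the same condition (\ref{nonneg}). I would also note that the range restriction (\ref{HypSpace}) plays no role in solving the algebraic system, since the squaring identity (\ref{HypQuad}) is purely formal; the constraint $|y|<x$ is only relevant to the hyperbola interpretation and need not be imposed on the zeros themselves.
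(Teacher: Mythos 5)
Your proposal is correct and follows essentially the same route as the paper: reduce (\ref{03}) to the scalar system $x^2+y^2+px+q=0$, $y(2x+p)=0$ and split on the factorization of the second equation. The only cosmetic difference is that the paper rules out solutions under (\ref{pos1}) in one stroke by completing the square, $(x+\frac{p}{2})^2+y^2=\frac{p^2}{4}-q<0$, whereas you verify the impossibility separately in each branch; both are valid.
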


\begin{proof}We first write the vector equation (\ref{03}) as the system of univariate equations
\begin{equation}\label{AA}
x^2+y^2+px+q=0
\end{equation}
and
\begin{equation}\label{BB}
2xy+py=0.
\end{equation}
Equation (\ref{AA}) can be rewritten as
\[
(x+\frac{p}{2})^2+y^2=\frac{p^2}{4}-q
\]
and has no solution if condition (\ref{pos1}) is satisfied. Therefore, let from now on assumption (\ref{nonneg}) be fulfilled. Equation (\ref{BB}) reads as
\[
y(2x+p)=0
\]
and can be solved if $y=0$ or $x=-\frac{p}{2}.$ \newline If $y=0$ then (\ref{AA})
coincides with (\ref{01}) and has therefore solutions $x_{1/2}$. In this case, (\ref{03}) is solved by $z_{1/2}$.
\newline
If $x=-\frac{p}{2}$ then equation (\ref{AA}) means $y^2=\frac{p^2}{4}-q$ or $y=\overset{+}-\sqrt{\frac{p^2}{4}-q}$ which leads to the  solutions $z_{3/4}$
\end{proof}

\begin{remark}
The solution behavior of a quadratic equation with accompanying variables in a hyperbolic space differs significantly from that in a Euclidean space or a space which is provided with an arbitrarily chosen phs-functional. The role a semi-antinorm in the sense of \cite{MoRi} plays in a corresponding hyperbolic space is discussed in \cite{Richter2022b}.
\end{remark}
\section{Comments}
In case condition (\ref{pos1}) is fulfilled, the well-known complex analysis literature on the solution of equation (\ref{01}) states  that it has the form
\begin{equation}\label{complex}x_{1/2}=-\frac{p}{2}\overset{+}-i\sqrt{q-\frac{p^2}{4}}
\end{equation}
and the following is said very nebulously about quantity $i$:
\begin{enumerate}
\item[P1] The quantity $i$ is not a real number.
\item[P2] The quantity $i$ satisfies the equation $i^2=-1.$
\end{enumerate}
Sometimes $i$ is not said to be just a quantity but to be a number with the properties P1 and P2. But what if the complex numbers have not yet been introduced, is a number other than a real number? For example, if $i$ ( in harmony with property P1)  denotes the "moon-number", how is the squaring of the "moon-number" defined, as in property P2 and moreover so that the same squaring-operation is also applicable to real numbers as in P2?
\newline
When graphically representing complex numbers, some authors start with a coordinate system whose axes they do not label. Figures of complex numbers shown should occasionally give then the impression that they are figures in $R^2.$ However, figures formed with imaginary numbers (or "moon-numbers") are imaginary by nature and actually therefore cannot be graphically represented. The fortunate resolution of this conflict is based on a consistent vector-space approach to complex numbers as used in \cite{Richter2020, Richter2022b} and here.
\newline
\qquad In a certain part of standard literature, $i$ is said  to be the vector
$I\hspace{-.14cm}{I}=\left(\begin{array}{c}
0 \\
1 \\
\end{array}
\right)$,
but in later steps of introducing complex numbers, for example, it is said there, in violation of mathematical correctness, that "obviously holds $1=1\hspace{-.15cm}{1}$".

Moreover, what can one imagine by the sum of the real number $-p/2$ and the nebulous expression $i\sqrt{q-p^2}$ as in (\ref{complex})? In comparison to (\ref{complex}), equation (\ref{Lsg2}) describes a  situation that is mathematically completely clarified on the basis of vector calculus.

To the best of author's knowledge, complex numbers are always introduced  in the standard literature in one way or another with such a serious lack of mathematical rigor.

The argument of unobserved contradictions when using imaginary numbers put forward as justification by some authors should from now on  be recognized as inadequacy.

If we consider the algebraic structure
\[\mathfrak C=(R^2,\oplus,\circledast,\cdot,\mathfrak o,1\hspace{-.14cm}1,I\hspace{-.19cm}I)\]
to be the complex plane
then $\mathfrak C$ is not an extension field of the real numbers because $R$ is isomorphic to a subspace of $R^2$ but not a subset. However, note that we then have
\[I\hspace{-.19cm}I^2=-1\hspace{-.14cm}1 \text{\; instead of\quad} "i^2=-1".\]

It was thanks to the genius of Gauss that he interpreted complex numbers as points in the complex number plane. At the time of writing, for example \cite{Gauss1799}, the theory of vector spaces was not yet available to him.
How cautious Gauss was towards the field of complex numbers is expressed, among other things, in his following words from \cite{Gauss1799}:
"Again, since the time when the analysts found out that there are infinitely many equations which
had no roots at all unless quantities of the form $a + b\sqrt{-1}$ were admitted as a peculiar kind of
quantities called imaginaries distinct from reals, these supposed quantities have been studied,
and have been introduced in all of analysis: with what justification, I shall not discuss here. I
shall free my proof from any help of imaginary quantities, although I might avail myself of that
liberty of which all those dealing with analysis make use."

\section{Declarations}

The author has no competing interests to declare that are relevant to the content of this article.
No funds, grants, or other support was received. The entire work  was written solely by the author.
% ----------------------------------------------------------------
\bibliographystyle{amsplain}
%\bibliography{}

\end{document}